\newtheorem{thm}{Theorem}[section]
\newtheorem{defn-lem}[thm]{Definition-Lemma}
\newtheorem{ex}[thm]{Example}
\newtheorem{prop}[thm]{Proposition}
\theoremstyle{remark}
\newtheorem{rem}[thm]{Remark}
\theoremstyle{definition}
\numberwithin{equation}{section}
\def \CA{{\mathcal A}}
\def \C{{\mathbb C}}
\def\map#1.#2.{#1 \longrightarrow #2}
\def\rmap#1.#2.{#1 \dasharrow #2}
\DeclareMathOperator{\rank}{rank}
\DeclareMathOperator{\corank}{corank}
\def\fb#1.{\underset #1 \to \times}
\def\pr#1.{\Bbb P^{#1}}
\def\ring#1.{\mathcal O_{#1}}
\def\mlist#1.#2.{{#1}_1,{#1}_2,\dots,{#1}_{#2}}
\def\uloopr#1{\ar@'{@+{[0,0]+(-4,5)} @+{[0,0]+(0,10)}
@+{[0,0]+(4,5)}}^{#1}}
\def\dloopr#1{\ar@'{@+{[0,0]+(-4,-5)} @+{[0,0]+(0,-10)}
@+{[0,0]+(4,-5)}}_{#1}}
\def\rloopd#1{\ar@'{@+{[0,0]+(5,4)} @+{[0,0]+(10,0)}
@+{[0,0]+(5,-4)}}^{#1}}
\newcommand{\rdots}{\mathinner{\mkern1mu\raise1pt\hbox{.}\mkern2mu\raise4pt\hbox{.} \mkern2mu\raise7pt\vbox{\kern7pt\hbox{.}}\mkern1mu}} 
\def\lloopd#1{\ar@'{@+{[0,0]+(-5,4)} @+{[0,0]+(-10,0)}
@+{[0,0]+(-5,-4)}}_{#1}}
\long\def\ignore#1{}
\long\def\ignore#1{#1}
\begin{document}

\begin{center}
{\bf A Short Note on the Thom-Boardman Symbols of Differentiable Maps}

\bigskip

\end{center}

\begin{center}  
Yulan Wang, Jiayuan Lin and Maorong Ge
\end{center}

{\small {\bf Abstract} It is well known that Thom-Boardman symbols are realized by non-increasing sequences of nonnegative integers. A natural question is whether the converse is also true. In this paper we answer this question affirmatively, that is, for any non-increasing sequence of nonnegative integers, there is a map-germ with the prescribed sequence as its Thom-Boardman symbol.}

\section{Introduction}

Thom-Boardman symbols were first introduced by R. Thom and later generalized by J. M. Boardman to classify singularities of differentiable maps. They are realized by non-increasing sequences of nonnegative integers. Although Thom-Boardman symbols have been around for over $50$ years, in general to compute those numbers is extremely difficult. Before J. Lin and J. Wethington \cite{LinWeth} proved R. Varley's conjecture on the Thom-Boardman symbols of polynomial multiplication maps, there were only sporadic known results. J. Lin and J. Wethington \cite{LinWeth} provides infinitely many examples of map-germs with distinct Thom-Boardman symbols. Since Thom-Boardman symbol are given by non-increasing sequences of nonnegative integers, a natural question is whether the converse is also true. In this paper we answer this question affirmatively. We prove that for any given non-increasing sequence of nonnegative integers, there is a map-germ with the prescribed sequence as its Thom-Boardman symbol.

For the reader's convenience, let us briefly recall the definition of Thom-Boardman symbols from \cite{Arnold}.

Let $x_1, \cdots, x_m$ be local coordinates on a differential manifold $M$ of dimension $m$. Denote $\CA$ the local ring of germs of differentiable functions at a point $x \in M$. For any ideal $B$ in $\CA$, the $\textit{Jacobian extension}$, $\Delta_k B$, is the ideal spanned by $B$ and all the minors of order $k$ of the $\textit{Jacobian matrix}$ $(\partial \phi_i/ \partial x_j)$, denoted $\delta B$, formed from partial derivatives of functions $\phi_i$ in $B$. We say that $\Delta_i B$ is $\textit{critical}$ if $\Delta_i B \ne \CA$ but $\Delta_{i-1} B = \CA$ (just $\Delta_1 B \ne \CA$ when $i=1$). That is, the $\textit{critical extension}$ of $B$ is $B$ adjoined with the least order minors of the Jacobian matrix of $B$ for which the extension does not coincide with the whole algebra. 

Suppose that $N$ is another differential manifold of dimension $n$ and $y_1, \cdots, y_n$ be local coordinates on it. For a differential map $F: M \rightarrow N, F=(f_1,f_2, \cdots, f_n)$, we denote $J$ the ideal generated by $f_1, \cdots, f_n$ in $\CA$. Then $\Delta_k J$ is spanned by $J$ and all the minors of order $k$ of the $\textit{Jacobian matrix}$ $\delta J=(\partial f_i/ \partial x_j)$.

Now we shift the lower indices to upper indices of the critical extensions by the rule $\Delta^i J=\Delta_{m-i+1} J$. We repeat the process described above with the resulting ideals until we have a sequence of critical extensions of $J$,

$$J \subseteq \Delta^{i_1} J \subseteq \Delta^{i_2} \Delta^{i_1} J \subseteq \cdots \subseteq \Delta^{i_k} \Delta^{i_{k-1}} \cdots \Delta^{i_1} J \subseteq \cdots.$$

The non-increasing sequence $(i_1, i_2, \cdots, i_k, \cdots)$ is called the $\textit{Thom-Boardman symbol}$ of $J$, denote $TB(J)$. The purpose of switching the indices is that doing so allows us to express $TB(J)$ as follows:

$$i_1=\corank (J), i_2= \corank (\Delta^{i_1} J), \cdots, i_k= \corank (\Delta^{i_{k-1}} \cdots \Delta^{i_1} J), \cdots$$

\noindent where the $\rank$ of ideal is defined to be the maximal number of independent coordinates from the ideal and the $\corank$ is the number of variables minus the rank. 

We also need the following construction from \cite{LinWeth}.

Let $M_n$ be the set of monic complex polynomials in one variable of degree $n$. $M_n \cong \C^n$ by the map sending 
$f(x)=x^n+a_{n-1} x^{n-1}+ \cdots + a_0$ to the $n$-tuple $(a_0,a_1,\cdots, a_{n-1}) \in \C^n$.

If we take $f(x)$ of degree $n$ as above and $g(x)=x^r+ b_{r-1} x^{r-1} + \cdots + b_0$ of degree $r$, then the product $h(x)=f(x) g(x)$ is a monic polynomial of the form $h(x)=x^{n+r} + c_{n+r-1} x^{n+r-1} + \cdots + c_0$, where the $c_j$'s are polynomials in the coefficients of $f$ and $g$. This gives us maps

$$\mu_{n,r}: \C^n \times \C^r \rightarrow \C^{n+r}$$

\noindent defined by 

$$(a_0, \cdots, a_{n-1}, b_0, \cdots, b_{r-1}) \rightarrow (c_{n+r-1}, \cdots, c_0).$$

\noindent Assume $n \ge r$ and consider the Euclidean algorithm applied to $n$ and $r$:

$$\begin{array}{lr} n=q_1 r+r_1,  \hskip .44 cm 0 < r_1 < r\\
r=q_2 r_1+r_2,  \hskip .35 cm 0 < r_2 < r_1 \\
\vdots\\
r_{k-1}=q_{k+1} r_k, \hskip .3 cm  0 < r_k <r_{k-1}.\end{array}$$

\noindent Let $I(n,r)$ be the tuple given by the Euclidean algorithm on $n$ and $r$:

$$I(n,r)=(r, \cdots, r, r_1, \cdots, r_1, \cdots, r_k, \cdots, r_k, 0, \cdots)$$

\noindent where $r$ is repeated $q_1$ times, and $r_i$ is repeated $q_{i+1}$ times.

\medskip

Let $I(\mu_{n,r})$ be the ideal in the algebra $\CA$ of germs at origin generated by $c_{j}$'s in the map $\mu_{n,r}: \C^n \times \C^r \rightarrow \C^{n+r}$. Denote $TB(I(\mu_{n,r}))$ the Thom-Boardman symbol of this ideal, Robert Varley conjectured that $TB(I(\mu_{n,r}))=I(n,r)$ for any $n \ge r$. In \cite{LinWeth}, J. Lin and J. Wethington confirmed Varley's Conjecture. For the reader's convenience, let us state their result here.

\begin{thm}
$TB(I(\mu_{n,r}))=I(n,r)$ for any $n \ge r$.
\end{thm}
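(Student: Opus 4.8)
The plan is to prove the identity by induction on $n+r$, reducing the multiplication map $\mu_{n,r}$ to the smaller map $\mu_{n-r,r}$ at each step. First I would compute the initial symbol directly. Writing $c_k=\sum_{i+j=k}a_ib_j$ with the conventions $a_n=b_r=1$, one checks that the Jacobian matrix $\delta J=(\partial c_k/\partial x_l)$ is exactly the Sylvester matrix of $f$ and $g$: the column $\partial/\partial a_i$ carries the coefficients of $g$ shifted down by $i$, and the column $\partial/\partial b_j$ carries the coefficients of $f$ shifted down by $j$. Evaluating at the origin, where $f=x^n$ and $g=x^r$, gives $dc_k|_0=0$ for $k<r$, then $dc_k|_0=da_{k-r}$ for $r\le k\le n-1$, and $dc_k|_0=da_{k-r}+db_{k-n}$ for $n\le k\le n+r-1$. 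These differentials span an $n$-dimensional space, so $\rank J=n$ and $\corank J=(n+r)-n=r$; hence $i_1=r$, the leading entry of $I(n,r)$.

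The inductive heart is the following reduction lemma: for $n\ge r\ge 1$ one has $TB(I(\mu_{n,r}))=(r,\,TB(I(\mu_{n-r,r})))$. Granting this, the theorem follows quickly. The symmetry $fg=gf$ gives $I(\mu_{n,r})=I(\mu_{r,n})$, and hence the same Thom-Boardman symbol, so I may always relabel so that the first degree is the larger one. The operation $(n,r)\mapsto(n-r,r)$, followed by relabeling whenever $n-r<r$, is then precisely the subtractive Euclidean algorithm on the pair $(n,r)$: it emits the value $r$ a total of $q_1$ times before the pair becomes $(r_1,r)$, then emits $r_1$ a total of $q_2$ times, and so on, terminating at a pair with smaller entry $0$. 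The base case is $\mu_{0,r}$, i.e. the nonsingular germ $g\mapsto g$, with $TB=(0,0,\dots)$. Reading off the emitted values reproduces exactly $I(n,r)=(r,\dots,r,r_1,\dots,r_1,\dots)$, so an induction on $n+r$ closes the argument.

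To prove the reduction lemma I would analyze the critical extension $\Delta^{i_1}J=\Delta^r J=\Delta_{n+1}J$, which adjoins to $J$ all $(n+1)\times(n+1)$ minors of the Sylvester matrix, namely the classical subresultant coefficients of $f$ and $g$. The governing mechanism is visible along the kernel of $d\mu_{n,r}$. That kernel is parametrized by a polynomial $U(x)=\sum_{l=0}^{r-1}u_lx^l$ through $\dot a_{n-r+l}=u_l$ and $\dot b_l=-u_l$, and along this direction the deformations are $f_\epsilon=x^{n-r}(x^r+\epsilon U)$ and $g_\epsilon=x^r-\epsilon U$, so that $f_\epsilon g_\epsilon=x^{n+r}-\epsilon^2 x^{n-r}U(x)^2$. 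The perturbed $f$ acquires the factor $x^{n-r}$, which is the source of the subtraction $n\mapsto n-r$. Reading the quadratic terms off the cokernel coordinates $c_0,\dots,c_{r-1}$, the intrinsic second derivative sends $c_k$ to the coefficient of $x^{k-(n-r)}$ in $U^2$; this form is nonzero only for the indices $k\ge n-r$, so when $n\ge 2r$ it misses the entire cokernel (and the corank stays $r$), while when $r\le n<2r$ it leaves exactly the $r_1=n-r$ directions $c_0,\dots,c_{n-r-1}$ untouched, forcing the next corank to be $r_1=\corank I(\mu_{n-r,r})$. I would make the full statement precise by exhibiting an analytic change of coordinates on $\C^{n+r}$ that splits off $r$ regular directions and identifies the remaining part of $\Delta_{n+1}J$, modulo units and higher-order terms, with the ideal $I(\mu_{n-r,r})$ in the complementary coordinates, so that the two ideals have identical Thom-Boardman tails.

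The main obstacle is exactly this last identification. Tracking the subresultant generators through the critical extension and proving that, after the coordinate change, they generate the ideal of the smaller multiplication map up to higher-order and unit factors, while simultaneously accounting for the drop in the relevant number of variables, is the delicate step; the Sylvester and subresultant bookkeeping and the control of the higher-order remainder are where the real work lies. Once the reduction lemma is available in this clean form, the passage from $\mu_{n,r}$ to $\mu_{n-r,r}$ is uniform and the induction runs without further complication.
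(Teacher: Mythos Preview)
The paper does not prove this theorem. Theorem~1.1 is quoted from \cite{LinWeth} as the Lin--Wethington confirmation of Varley's conjecture and is used in the present paper only as a black box in the proof of Theorem~1.3. There is therefore no in-paper argument against which to compare your proposal.

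Regarding the proposal on its own terms: your computation of the first entry $i_1=r$ from the Sylvester description of the Jacobian at the origin is correct, and the inductive scheme driven by the subtractive Euclidean algorithm is the natural strategy, matching the shape of $I(n,r)$. But what you have written is, as you yourself say, a plan and not a proof. The crucial reduction lemma $TB(I(\mu_{n,r}))=(r,\,TB(I(\mu_{n-r,r})))$ is asserted, motivated heuristically, and then left open. Two points deserve emphasis. First, the second-order analysis along the kernel of $d\mu_{n,r}$ is suggestive but is not the same computation as the critical Jacobian extension: $\Delta^{r}J=\Delta_{n+1}J$ is generated by the $(n+1)\times(n+1)$ minors of the full Sylvester matrix (the subresultant coefficients), and these generators are not recovered by restricting to kernel directions. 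Second, even granting that $\Delta^{r}J$ has the expected corank, one must show that after splitting off $r$ regular source coordinates the remaining ideal is right-equivalent to $I(\mu_{n-r,r})$ in the reduced set of variables, uniformly enough that the entire tail of the symbol transfers; this is exactly the substantive content of \cite{LinWeth}, and your sketch does not supply it. Until that identification is carried out, the induction does not close.
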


Professor J. M. Boardman provides us the following example which has constant Thom-Boardman symbol.

\begin{ex} The zero map-germ at origin $F: \C^a \rightarrow \C^b$ has Thom-Boardman symbol $(a, a, \cdots, a, \cdots )$. 
\end{ex}

Denote $\mu_{\infty,a}$ the map-germ in Example $1.2$. We have the following theorem.

\begin{thm} Let $(i_0, \cdots, i_0, i_1, \cdots, i_1, \cdots, i_k, \cdots, i_k, \cdots)$ be a non-increasing sequence of nonnegative integers, where $i_0 >i_1 > \cdots > i_k \ge 0$ for some $k$, $i_j$ repeats $l_j$ times for each $j <k$ and $i_k$ repeats infinitely many times. Denote $\mu$ as the Cartesian product of the map-germs of 

$$\mu_{(i_0-i_1) l_0,i_0-i_1} \times \mu_{(i_1-i_2) (l_0+l_1),i_1-i_2} \times \cdots \times \mu_{(i_{k-1}-i_k) (l_0+l_1+\cdots+l_{k-1}),i_{k-1}-i_k} \times \mu_{\infty,i_k}$$

\noindent at origin. Then $(i_0, \cdots, i_0, i_1, \cdots, i_1, \cdots, i_k, \cdots, i_k, \cdots)$ is the Thom-Boardman symbol of the map-germ $\mu$.

\end{thm}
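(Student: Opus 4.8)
The plan is to reduce Theorem 1.3 to a \emph{product formula} for Thom-Boardman symbols, together with a direct evaluation of the symbol of each factor by Theorem 1.1 and Example 1.2. The product formula I would establish is: if $F\colon \C^{m_1}\to\C^{n_1}$ and $G\colon \C^{m_2}\to\C^{n_2}$ are map-germs at the origin with $TB(I(F))=(s_1,s_2,\dots)$ and $TB(I(G))=(t_1,t_2,\dots)$, then $TB(I(F\times G))=(s_1+t_1,\,s_2+t_2,\,\dots)$, the termwise sum. Granting this, write $L_{p-1}=l_0+\cdots+l_{p-1}$. The $p$-th factor ($p=1,\dots,k$) is $\mu_{(i_{p-1}-i_p)L_{p-1},\,i_{p-1}-i_p}$, and since $(i_{p-1}-i_p)\mid (i_{p-1}-i_p)L_{p-1}$ the Euclidean algorithm terminates at the first step; hence by Theorem 1.1 its symbol is $(i_{p-1}-i_p)$ repeated $L_{p-1}$ times followed by zeros. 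The final factor $\mu_{\infty,i_k}$ has the constant symbol $(i_k,i_k,\dots)$ by Example 1.2. Summing termwise, at a position where the target value is $i_j$ the still-nonzero factors are exactly those with index $p>j$, and they telescope as $\sum_{p=j+1}^{k}(i_{p-1}-i_p)=i_j-i_k$, to which the constant factor adds $i_k$, giving $i_j$; beyond position $L_{k-1}$ only the constant $i_k$ survives. This reproduces the prescribed sequence exactly.

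To set up the product formula I would use that the ring of germs at the origin of $\C^{m_1+m_2}$ is $\CA_1\,\widehat{\otimes}\,\CA_2$ and that $I(F\times G)=I(F)\CA+I(G)\CA$. Because the components of $F$ involve only the first block of coordinates and those of $G$ only the second, the Jacobian matrix of a generating set is block diagonal (and, modulo the ideal itself, minors formed from arbitrary elements reduce to minors formed from these generators). The heart of the argument is an induction on the length of the chain, asserting that the $p$-th ideal $D_p$ in the chain of $I(F\times G)$ equals $(B_p)\CA+(C_p)\CA$, where $B_p,C_p$ are the $p$-th ideals in the chains of $I(F)$ and $I(G)$, and that $\corank$ is additive at each stage. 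Corank additivity is the easy half: for a block diagonal Jacobian every nonzero $k\times k$ minor factors as an $a\times a$ minor of the first block times a $b\times b$ minor of the second with $a+b=k$, so the largest size of a \emph{unit} minor is $\rho_1+\rho_2$ (with $\rho_i$ the rank of the relevant block), whence $\corank D_p=(m_1-\rho_1)+(m_2-\rho_2)=\corank B_p+\corank C_p$.

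The main obstacle, and the crux of the lemma, is the inductive step showing that the critical extension of a product ideal is again a product ideal. Setting $\rho_1=\rank B_p$ and $\rho_2=\rank C_p$, one must analyze $\Delta_{\rho_1+\rho_2+1}$ of the block diagonal matrix. Each $(\rho_1+\rho_2+1)$-minor factors as an $a$-minor of block one times a $b$-minor of block two with $a+b=\rho_1+\rho_2+1$. Using that block one carries a unit $\rho_1$-minor and block two a unit $\rho_2$-minor, the splittings $(\rho_1+1,\rho_2)$ and $(\rho_1,\rho_2+1)$ contribute, up to units, exactly the $(\rho_1+1)$-minors of block one and the $(\rho_2+1)$-minors of block two; every other splitting lands inside these ideals, since by Laplace expansion a $(\rho_i+2)$-minor lies in the ideal generated by the $(\rho_i+1)$-minors. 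Consequently the critical extension equals $(\Delta^{s}B_p)\CA+(\Delta^{t}C_p)\CA$ with $s=\corank B_p$ and $t=\corank C_p$, which is $(B_{p+1})\CA+(C_{p+1})\CA$, closing the induction.

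I expect the delicate points to be entirely contained in this last step: tracking which minors remain units as the chain progresses (so that the two privileged splittings really produce the factor extensions), verifying that all mixed minors collapse into these two ideals via Laplace, and confirming the degenerate behavior once a factor reaches $\corank=0$ (where the critical extension stabilizes, since no higher-order minors exist). The remaining ingredients—invoking Theorem 1.1 in the divisibility case and carrying out the telescoping sum—are routine once the product formula is in hand.
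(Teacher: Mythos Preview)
Your proposal is correct and follows essentially the same route as the paper: establish the additivity of Thom--Boardman symbols under Cartesian product (the paper's Proposition~2.1, argued exactly via the block-diagonal Jacobian and the factorization of minors you describe), then combine this with Theorem~1.1 and Example~1.2. The only cosmetic difference is that the paper packages the final computation as an induction on $k$ (subtract $i_{k}$, apply the hypothesis, then multiply by $\mu_{\infty,i_{k}}$), whereas you sum all factors at once and telescope; these are the same calculation.
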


\begin{rem}
Historically there were only sporadic known Thom-Boardman symbols for some map-germs. J. Lin and J. Wethington \cite{LinWeth} gave infinitely many examples of map-germs with distinct Thom-Boardman symbols.  However, Theorem $1.3$ provides us a complete list of representatives of map-germs classified by their Thom-Boardman symbols. We should warn the reader that the classification of map-germs by their Thom-Boardman symbols is not complete. Two non- left-right equivalent map-germs may have the same Thom-Boardman symbols. Under such a circumstance, other invariants are needed (e.g. see page $67$ in \cite{Arnold} and \cite{Adams}). 
\end{rem}

The proof of Theorem $1.3$ is very simple. We first prove that Thom-Boardman symbols have additive property under Cartesian product. Combining this property with Theorem $1.1$, we immediately obtain Theorem $1.3$. 

{\bf Acknowledgments} The authors thank Professor J. M. Boardman for providing us Example $1.2$, which is even not well-known among experts. We are grateful for his generosity to allow us to use his example in Theorem $1.3$. Without it, Theorem $1.3$ would be incomplete.

\section{Proof of Theorem $1.3$}

We first prove that Thom-Boardman symbols have additive property under Cartesian product.

\begin{prop}
Let $F_t: M_t \rightarrow N_t, t=1,2$ be two differentiable maps. Then the Thom-Boardman symbol $TB(F)$ of $F=(F_1,F_2): M_1 \times M_2 \rightarrow N_1 \times N_2$ is equal to the sum of $TB(F_1)$ and $TB(F_2)$, where $F=(F_1,F_2): M_1 \times M_2 \rightarrow N_1 \times N_2$  is the Cartesian product of $F_t: M_t \rightarrow N_t, t=1,2$.
\end{prop}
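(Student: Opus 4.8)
The plan is to exploit the block structure that the Cartesian product imposes on every Jacobian matrix arising in Boardman's algorithm, and to prove by induction that the entire chain of critical extensions for $F$ splits as the sum of the chains for $F_1$ and $F_2$.

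First I would fix coordinates: pick local coordinates $x_1,\dots,x_{m_1}$ on $M_1$ and $x_1',\dots,x_{m_2}'$ on $M_2$, so that together they are coordinates on $M_1\times M_2$, with $\CA_t$ the germ ring of $M_t$ and $\CA$ that of the product. Writing $F_1=(f_1,\dots,f_{n_1})$ and $F_2=(g_1,\dots,g_{n_2})$, the ideal of $F$ is $J=J_1\CA+J_2\CA$, where $J_t$ is the ideal of $F_t$. Call an ideal $B\subseteq\CA$ \emph{decomposable} if $B=B_1\CA+B_2\CA$ with each $B_t\subseteq\CA_t$ a proper ideal. Since the $f_i$ involve only the $x_j$ and the $g_i$ only the $x_j'$, the Jacobian $\delta J$ is block diagonal with blocks $\delta J_1$ and $\delta J_2$; the same holds for any decomposable ideal, whose generators split into two groups in disjoint sets of variables.

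The first observation is that corank is additive on decomposable ideals: for proper $B=B_1\CA+B_2\CA$, every differential $dh(0)$ with $h\in B$ lies in $T_0^{*}M_1\oplus T_0^{*}M_2$ with components coming from $B_1$ and $B_2$ respectively, so $\rank B=\rank B_1+\rank B_2$ and hence $\corank B=\corank B_1+\corank B_2$. This gives each Thom--Boardman index of $F$ as the sum of the corresponding indices of $F_1$ and $F_2$, \emph{provided} every ideal in the chain for $F$ is decomposable. That is the heart of the matter, and I would establish it by induction together with the identity
$$\Delta^{\corank B}\,B=\bigl(\Delta^{\corank B_1}B_1\bigr)\CA+\bigl(\Delta^{\corank B_2}B_2\bigr)\CA .$$
To prove this, note that if the two blocks have ranks $\rho_1,\rho_2$ at the origin then the critical order of the block-diagonal $\delta B$ is $\rho_1+\rho_2+1$: a minor of that order factors as a product of a minor of $\delta B_1$ and one of $\delta B_2$ of complementary sizes, and a counting argument forces one factor to exceed the rank of its block, so the minor vanishes at the origin, while a unit minor of order $\rho_1+\rho_2$ exists. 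Multiplying any $(\rho_1+1)$-minor of the first block by a unit $\rho_2$-minor of the second (which exists because that block has rank $\rho_2$ at the origin), and using that larger minors lie in the ideal generated by smaller ones, recovers exactly the critical extensions of the two factors and nothing more. The resulting ideal is again decomposable and its Jacobian is again block diagonal, so the argument repeats verbatim; by induction every term of the chain $J\subseteq\Delta^{i_1}J\subseteq\cdots$ is decomposable and $i_k=i_k(F_1)+i_k(F_2)$ at every stage, giving $TB(F)=TB(F_1)+TB(F_2)$.

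The main obstacle is the displayed critical-extension identity: one must check that the minors of the critical order generate \emph{precisely} the sum of the two factor critical extensions. This needs the factorization of minors of a block-diagonal matrix, the unit-minor cancellation, and careful treatment of the degenerate cases where one block already has full rank (corank $0$), so that minors of the ostensibly relevant order simply fail to exist. Once this identity is in hand, the corank additivity and the induction are routine.
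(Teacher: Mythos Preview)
Your proposal is correct and follows essentially the same approach as the paper: both exploit the block-diagonal form of the Jacobian, use the pigeonhole count on rows to get one inclusion of the critical-extension identity, use multiplication by a unit minor of the complementary block for the reverse inclusion, and then induct. Your framing via ``decomposable'' ideals and your explicit mention of the Laplace-type fact that larger minors lie in the ideal generated by the critical-size ones are slightly more careful than the paper's write-up, but the argument is the same.
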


\begin{proof}
Denote $TB(F)=(s_1, s_2, \cdots, s_p, \cdots)$ and $TB(F_t)=(s_1^{(t)}, s_2^{(t)}, \cdots, s_p^{(t)}, \cdots), t=1,2$. We will show that $s_p=s_p^{(1)}+s_p^{(2)}$ for all integer $p \ge 1$.

Let $x_1^{(t)}, \cdots, x_{m_t}^{(t)}$ be local coordinates of origin on the differential manifolds $M_t, t=1,2$ and $y_1^{(t)}, \cdots, y_{n_t}^{(t)}$ be local coordinates on the differential manifolds $N_t, t=1,2$. Denote $F_t: M_t \rightarrow N_t, F_t=(f_1^{(t)}, \cdots, f_{n_t}^{(t)})$ and $J_{F_t}$ the ideal generated by $f_1^{(t)}, \cdots, f_{n_t}^{(t)}$ in $\CA^{(t)}, t=1,2$, the local ring of germs of differentiable functions at origin in $M_t$. Denote $J_F$ the ideal generated by $f_1^{(1)}, \cdots, f_{n_1}^{(1)}; f_1^{(2)}, \cdots, f_{n_2}^{(2)}$ in $\CA$, the local ring of germs of differentiable functions at origin in $M_1 \times M_2$. Note that in general $\CA^{(1)} \times \CA^{(2)} \varsubsetneq \CA $ and $J_{F_1} \times J_{F_2} \varsubsetneq J_F $. However, it does not affect our conclusion on the additive property of Thom-Boardman symbols. Using induction, we will prove that $\Delta^{s_p} \Delta^{s_{p-1}} \cdots \Delta^{s_1} J_F=(\Delta^{s_p^{(1)}} \Delta^{s_{p-1}^{(1)}} \cdots \Delta^{s_1^{(1)}} J_{F_1}; \Delta^{s_p^{(2)}} \Delta^{s_{p-1}^{(2)}} \cdots \Delta^{s_1^{(2)}} J_{F_2})$ in $\CA $, which immediately implies the conclusion in Proposition $2.1$.

When $p=1$, it is easy to see that the Jacobian matrix 

\begin{equation}
\begin{split}
\delta J_F=
\begin{pmatrix} 
\delta J_{F_1} &0\cr
0 &\delta J_{F_2}\cr
\end{pmatrix}. 
\end{split}
\end{equation}

Therefore $\corank (J_F)=\corank (J_{F_1})+\corank (J_{F_2})$, that is, $s_1=s_1^{(1)}+s_1^{(2)}$.

The critical extension of $\Delta^{s_1} J_F$ is spanned by $f_1^{(1)}, \cdots, f_{n_1}^{(1)}; f_1^{(2)}, \cdots, f_{n_2}^{(2)}$ and all the minors of order $m_1+m_2-s_1+1$ of the Jacobian matrix $\delta J_F$. Any minor of order $m_1+m_2-s_1+1$ of the Jacobian matrix $\delta J_F$ contains either at least $m_1-s_1^{(1)}+1$ rows from $\delta J_{F_1}$ or $m_2-s_1^{(2)}+1$ rows from $\delta J_{F_2}$, so $\Delta^{s_1} J_F \subseteq (\Delta^{s_1^{(1)}} J_{F_1}; \Delta^{s_1^{(2)}} J_{F_2})$ in $\CA$. It is easy to see that the inclusion $\Delta^{s_1} J_F \subseteq (\Delta^{s_1^{(1)}} J_{F_1}; \Delta^{s_1^{(2)}} J_{F_2})$ is actually an equality because any minor of order $m_t-s_1^{(t)}+1$ of $\delta J_{F_t}, t=1, 2$ is only different from a minor of order $m_1+m_2-s_1+1$ of $\delta J_F$ by a unit in $\CA$. 

Suppose that $\Delta^{s_p} \Delta^{s_{p-1}} \cdots \Delta^{s_1} J_F=(\Delta^{s_p^{(1)}} \Delta^{s_{p-1}^{(1)}} \cdots \Delta^{s_1^{(1)}} J_{F_1}; \Delta^{s_p^{(2)}} \Delta^{s_{p-1}^{(2)}} \cdots \Delta^{s_1^{(2)}} J_{F_2})$ in $\CA $ for some $p \ge 1$. Because the generators of $\Delta^{s_p^{(1)}} \Delta^{s_{p-1}^{(1)}} \cdots \Delta^{s_1^{(1)}} J_{F_1}$ involve only $x_1^{(1)}, \cdots, x_{m_1}^{(1)}$ and the generators of $\Delta^{s_p^{(2)}} \Delta^{s_{p-1}^{(2)}} \cdots \Delta^{s_1^{(2)}} J_{F_2}$ involve only $x_1^{(2)}, \cdots, x_{m_2}^{(2)}$, the Jacobian matrix $\delta \Delta^{s_p} \Delta^{s_{p-1}} \cdots \Delta^{s_1} J_F$ has the form

\begin{equation}
\begin{split}
\delta \Delta^{s_p} \Delta^{s_{p-1}} \cdots \Delta^{s_1} J_F=
\begin{pmatrix} 
\delta \Delta^{s_p^{(1)}} \Delta^{s_{p-1}^{(1)}} \cdots \Delta^{s_1^{(1)}} J_{F_1}&0\cr
0 &\delta \Delta^{s_p^{(2)}} \Delta^{s_{p-1}^{(2)}} \cdots \Delta^{s_1^{(2)}} J_{F_2}\cr
\end{pmatrix}
\end{split}
\end{equation}

The same argument as the case $p=1$ gives $s_{p+1}=s_{p+1}^{(1)}+s_{p+1}^{(2)}$ and $\Delta^{s_{p+1}} \Delta^{s_p}  \cdots \Delta^{s_1} J_F$

\noindent $=(\Delta^{s_{p+1}^{(1)}} \Delta^{s_p^{(1)}}  \cdots \Delta^{s_1^{(1)}} J_{F_1}; \Delta^{s_{p+1}^{(2)}} \Delta^{s_p^{(2)}}  \cdots \Delta^{s_1^{(2)}} J_{F_2})$ in $\CA $. This completes the proof of Proposition $2.1$.
\end{proof}

The proof of Theorem $1.3$ is an easy corollary of Proposition $2.1$ and Theorem $1.1$. 

\begin{proof} (Proof of Theorem $1.3$)

We use induction on $k$ to prove Theorem $1.3$.

When $k=0$, the sequence in Theorem $1.3$ is a constant sequence, it is the Thom-Boardman symbol of the map-germ $\mu_{\infty,i_0}$.

Suppose that Theorem $1.3$ is true for $k=p$, $p \ge 0$. When $k=p+1$, by inductive assumption and Theorem $1.1$, the map-germ 

\noindent $\tilde{\mu}= \mu_{[(i_0-i_{p+1})-(i_1-i_{p+1})] l_0,[(i_0-i_{p+1})-(i_1-i_{p+1})]} \times \mu_{[(i_1-i_{p+1})-(i_2-i_{p+1})] (l_0+l_1),[(i_1-i_{p+1})-(i_2-i_{p+1})]} \times \cdots \times \mu_{[(i_{p}-i_{p+1})-(i_{p+1}-i_{p+1})] (l_0+l_1+\cdots+l_{p}),[(i_{p}-i_{p+1})-(i_{p+1}-i_{p+1})]}=\mu_{(i_0-i_1) l_0,i_0-i_1} \times \mu_{(i_1-i_2) (l_0+l_1),i_1-i_2}$

\noindent $\times \cdots \times \mu_{(i_{p}-i_{p+1}) (l_0+l_1+\cdots+l_{p}),i_{p}-i_{p+1}}$ has Thom-Boardman symbol $(i_0-i_{p+1}, \cdots, i_0-i_{p+1}, i_1-i_{p+1}, \cdots, i_1-i_{p+1}, \cdots, i_p-i_{p+1}, \cdots, i_p-i_{p+1}, 0, \cdots, 0, \cdots)$. Now let $\mu=\tilde{\mu} \times \mu_{\infty,i_{p+1}}$. By Proposition $2.1$, it has Thom-Boardman symbol $(i_0, \cdots, i_0, i_1, \cdots, i_1, \cdots, i_{p+1}, \cdots, i_{p+1}, \cdots$

\noindent $)$. This completes the proof of Theorem $1.3$.

\end{proof} 

\begin{rem}
As an easy consequence of Proposition $2.1$ and Theorem $1.1$, we also obtain that the map-germs $\mu_{kr,r}$ and Cartesian product of $r$ copies of $\mu_{k,1}$ have the same Thom-Boardman symbols. So Theorem $1.3$ can be restated in term of $\mu_{k,1}$'s and $\mu_{\infty,1}$. Those special map-germs form building blocks for map-germs with arbitrary Thom-Boardman symbols.
\end{rem}

{}

{\footnotesize DEPARTMENT OF MATHEMATICS, SUNY CANTON, 34 CORNELL DRIVE, CANTON, 

\hskip .05 cm NY 13617, USA}

{\footnotesize ANHUI ECONOMIC MANAGEMENT INSTITUTE, HEFEI, ANHUI, 230059, CHINA}

$\textit{E-mail address:}$ wangy@canton.edu

\medskip

{\footnotesize DEPARTMENT OF MATHEMATICS, SUNY CANTON, 34 CORNELL DRIVE, CANTON, 

\hskip .05 cm NY 13617, USA}

$\textit{E-mail address:}$ linj@canton.edu

\medskip

{\footnotesize DEPARTMENT OF MATHEMATICS, ANHUI UNIVERSITY, HEFEI, ANHUI, 230039, CHINA}

$\textit{E-mail address:}$ ge1968@126.com
\end{document}